\title{Unknotting $3$-Balls in the $5$--Ball}
\author{Daniel Hartman}
\address{Department of Mathematics\\ University
   of Georgia. }
\email{daniel.hartman@uga.edu}
\newtheorem{theorem}{Theorem}
\newtheorem{lemma}[theorem]{Lemma}
\newtheorem{corollary}[theorem]{Corollary}
\theoremstyle{definition}
\def\R{\mathbb R}
\newcommand{\Diff}{\mathop{\rm Diff}\nolimits}
\newcommand{\Emb}{\mathop{\rm Emb}\nolimits}
\begin{document}
\begin{abstract}
    The purpose of this note is to provide a positive answer to a question posed by Gay, as well as Hughes, Kim, and Miller regarding smooth embeddings of the $3$-ball in the $4$-sphere becoming isotopic relative to the bounding $2$-sphere when pushed into the $5$-ball.
\end{abstract}
\maketitle

%The following note serves to answer the question as to if any 3--ball embedded in $S^4$ remains knotted when pushed into $B^5$.

%Using a standard innermost disk argument, one can show that the 1--unknot bounds a unique disk up to isotopy in $S^3$. Now in \cite{BG-3ball}, the authors showed the same result is false for $2$--unknot in $S^4$. In fact, they showed there is infinitely many non-isotopic $3$--balls bounding the same $2$--sphere. Now, based on their construction, it is easy to see all the Budney-Gabai examples become isotopic when pushed into the $5$--ball. In light of this, it is natural to ask if any $3$--ball can remain knotted. In \cite{hughes2021knotted}, Hughes Kim and Miller answered the question for most higher genus surfaces. They showed that for every unknotted surface of genus of at least 2, there existence of a pair of handlebodies, $H_1$ and $H_2$, smoothly embedded in $S^4$ with $\partial H_1= \partial H_2$, such that $H_1$ is not isotopic to $H_2$ relative to $\partial H$, even when pushed into the $5$--ball. Our result shows that, at least for $3$--balls bounding $
%\mathcal{U}$, they are always isotopic when pushed into the $5$--ball. 

 In this note, we address a question raised by Gay and by Hughes, Kim, and Miller concerning the isotopy class of smoothly embedded 3-balls in the 4-sphere relative to the bounding 2-sphere when pushed into the 5-ball. We provide an affirmative answer by showing that any 3-ball in the 4-sphere is isotopic to a standardly embedded 3-ball in the 5-ball.

\begin{theorem}\label{T: Unknot B^3}
Let $~\mathcal{U}$ be the unknotted $S^2$ in $S^4$. Let $B$ and $B'$ two embedded $3$-balls with $\partial B=\partial B'=\mathcal{U}$. Then $B$ is isotopic to $B'$ rel $\mathcal{U}$ in $B^5$.
\end{theorem}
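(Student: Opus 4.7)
The plan is to reformulate the theorem as the existence of a smoothly embedded $4$-ball in $B^5$ bounding a specific $3$-sphere built from $B$ and a standard reference ball. Fix a standard $3$-ball $B_0 \subset S^4$ with $\p B_0 = \mathcal{U}$, which exists since $\mathcal{U}$ is unknotted; by transitivity of isotopy it suffices to prove every $B$ with $\p B = \mathcal{U}$ is isotopic to $B_0$ rel $\mathcal{U}$ in $B^5$. Using a collar of $S^4$ in $B^5$ together with a bump function on $B$ vanishing on $\p B$, push $B$ slightly into $\interior B^5$ to obtain $\tilde B$ meeting $B_0$ only along $\mathcal{U}$. After corner-rounding along $\mathcal{U}$, the union $\Sigma = \tilde B \cup B_0$ is a smoothly embedded $3$-sphere in $B^5$. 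If $\Sigma$ bounds a smoothly embedded $4$-ball $W \subset B^5$, then slicing $W$ by a Morse function whose minimum and maximum level sets are $B_0$ and $\tilde B$ produces the desired isotopy rel $\mathcal{U}$, reducing the theorem to the construction of such a $W$.

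To produce $W$, I would first use that $B^5$ is contractible, so $\Sigma \cong S^3$ is null-homotopic and bounds a continuous $4$-disk that, after perturbation, is a smooth immersion with embedded boundary $\Sigma$. The task is then to improve this immersion to an embedding. A natural strategy is to exploit the canonical decomposition of $\Sigma$ along $\mathcal{U}$ into two $3$-balls. The unknottedness of $\mathcal{U}$ gives a standard product neighborhood $\mathcal{U} \times D^3 \subset B^5$, and inside this neighborhood the portion of $\Sigma$ near $\mathcal{U}$ sits as a standard disk bundle $\mathcal{U} \times S^2_+$ that bounds the standard $\mathcal{U} \times D^3_+$. The $B_0$-side of $W$ can then be filled trivially by extending through the collar of $S^4$ into $B^5$. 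This reduces the problem to exhibiting a single $4$-ball in $B^5$ filling the $\tilde B$-side, i.e. a $4$-ball bounded by $\tilde B$ together with a standard $3$-ball sitting on the boundary of the product neighborhood of $\mathcal{U}$.

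The main obstacle is this final filling step, which is delicate because the ambient codimension is $1$: generic $4$-disks in $B^5$ have $3$-dimensional self-intersection, so naive general-position arguments fail and the standard Whitney trick does not directly apply. I expect the resolution to use the unknottedness of $\mathcal{U}$ more forcefully, either via a Cerf-theoretic cancellation of critical points of a Morse function on a proposed immersed fill (so that all critical points below index $2$ cancel, and the index $2$ critical points can be removed by handle slides since the relevant algebraic intersection numbers vanish in the simply connected $B^5$), or by a direct geometric construction realizing the fill as a parameterized family of $3$-balls interpolating between $\tilde B$ and a standard model. Either way, verifying that the filling can be made smooth rel $\mathcal{U}$, and that the resulting $W$ is genuinely embedded and meets $\mathcal{U}$ only along $\p W = \Sigma$, is where I expect the bulk of the technical work to lie.
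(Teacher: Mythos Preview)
Your reduction---push $B$ into the interior, cap with a reference ball $B_0$, round corners to get a smooth $3$-sphere $\Sigma$, and observe that an embedded $4$-ball $W$ with $\partial W = \Sigma$ yields the isotopy---is essentially the same starting point as the paper's argument. The genuine gap is in your final filling step. The strategies you sketch (perturb a null-homotopy to an immersion, then attempt Cerf-theoretic cancellation of critical points or handle slides) are not a proof, and you yourself correctly flag the codimension-$1$ obstruction that blocks naive general-position or Whitney-trick arguments. Nothing in the proposal actually produces the embedded $W$; the last paragraph is a list of hopes rather than a mechanism.

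The idea you are missing is to work not in $B^5$ but in the complement of $\mathcal{U}$. Since $\mathcal{U}$ is unknotted, $Y = S^4 \setminus \nu(\mathcal{U}) \cong S^1 \times B^3$, and a collar of $S^4$ in $B^5$ carries $Y$ to $Y \times I \cong S^1 \times B^4$. The $3$-sphere $\Sigma$ (built from the two balls and the cylinder $\mathcal{U} \times I$) then sits in $\partial(S^1 \times B^4) = S^1 \times S^3$ as a \emph{nonseparating} $3$-sphere. Now the filling becomes a concrete question about $S^1 \times S^3$: by transitivity of $\Diff(S^1 \times S^3)$ on nonseparating $3$-spheres, $\Sigma$ is the image of a standard slice $\{z_0\} \times S^3$ under some diffeomorphism $F$, and by the Lashof--Rothenberg classification of pseudoisotopies of $S^1 \times S^3$, this $F$ extends over $S^1 \times B^4$, carrying $\{z_0\} \times B^4$ to the desired $W$. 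Your product-neighborhood discussion $\mathcal{U} \times D^3$ was pointing in the right direction, but the decisive move is to pass to the complement and invoke this extension theorem, not to attempt surgery on an immersed disk in $B^5$.
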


Using a standard innermost disk argument, one can show that the 1-unknot bounds a unique disk up to isotopy in $S^3$. However, in \cite{BG-3ball}, the authors showed that the same result is false for the 2-unknot in $S^4$. They demonstrated that for the unknotted 2-sphere, there are infinitely many non-isotopic 3-balls that bound the same 2-sphere. A consequence of Budney-Gabai's construction was that all of their examples became isotopic when pushed into the 5-ball. This raised the question of whether any 3-ball could remain knotted. In \cite{hughes2021knotted}, Hughes, Kim, and Miller answered this question for most higher genus surfaces. Specifically, they showed that for every unknotted surface of genus at least 2, there exists a pair of handlebodies, $H_1$ and $H_2$, which are homeomorphic as 3-manifolds, both smoothly embedded in $S^4$ with $\partial H_1=\partial H_2$, but that $H_1$ is not isotopic to $H_2$ relative to $\partial H$, even when pushed into the 5-ball. 
%\begin{question}
%Does there exist two non-isotopic embeddings of a $3$--manifolds, $M$, with $\partial M = \mathcal{U}_k$ that remains non-isotopic in the $5$--ball? 
%\end{question}

%The unknotting of $3$--balls arose from a different context for Gay. In section 4 of \cite{gay2021diffeomorphisms}, Gay introduces the notion of $5$--dimensional dotted $1$-- and $2$--handles. Let $\mathcal{U}$ denote the unknotted 2-sphere in $S^4$, and let $B$ be any $3$--ball bounding $\mathcal{U}$. Considering $S^4$ as the boundary of $B^5$, we can push the interior of the $3$-ball into the $5$--ball, then carve out a neighborhood. The resulting bounding $4$-manifold is $S^1\times S^3$, and the main theorem of this note implies that the $5$--manifold is in fact $S^1\times B^4$. So, just like in the $4$--dimensional case, carving out any $B^3$ bounding $\mathcal{U}$ is equivalent to attaching a $5$--dimensional $1$--handle.    

%Implicit in all the proofs below is the fact that corners can be smoothed. We will work in the smooth category. The main argument for unknotting any $B^3$ is derived from the fact that any nonseparating $S^3$ in $S^1\times S^3$ extends to a smoothly embedded $B^4$ in $S^1\times B^4$. This is a consequence of the classification of pseudoisotopies of $S^1\times S^3$ (\cite{lashof1969classification}). %This perspective allows us to then generalize the result to any collection of disjoint 3-balls smoothly embedded in $S^4$ with boundary the unlink, by appealing to the generalized Laudenbach and Po\'enaru theorem given in \cite{HighDim-LP}. 

The unknotting of 3-balls arose from a different context for Gay. In section 4 of \cite{gay2021diffeomorphisms}, Gay introduces the notion of 5-dimensional dotted 1- and 2-handles Considering $S^4$ as the boundary of $B^5$, we can push the interior of any 3-ball bounding $\mathcal{U}$ into the 5-ball, and then carve out a neighborhood. The resulting 4-manifold boundary is $S^1\times S^3$, and the main theorem of this note implies that the 5-manifold is, in fact, $S^1\times B^4$. Just like carving a boundary parallel disk out of the 4-ball, carving out any $B^3$ bounding $\mathcal{U}$ is equivalent to attaching a 5-dimensional 1-handle.

%In this note, We will work exclusively in the smooth category. Implicit in all the proofs below is the fact that corners can be smoothed. We will use the notation $\Diff(X,\partial X)$ to refer to the group of diffeomorphisms which fix the boundary of the given manifold $X$ point wise. If we have an embedding $f:\partial N\rightarrow \partial X$, then we will denote the set of neat embeddings which coincide with $f$ on the boundary by $\Emb_{f}(N,X)$. For the space $S^1\times B^3$, we let $i:S^2\rightarrow \partial (S^1\times B^3)$ denote the embedding given by $i(s) = (z_0,s)$, for a fixed $z_0$. This map is just the restriction of the inclusion map for $i:\{z_0\}\times B^3\rightarrow S^1\times B^3$. We will abuse notation by letting $i$ denote several different maps in different places.

In this paper, we work exclusively in the smooth category. Implicit in all the proofs below is the fact that corners can be smoothed. We will use the notation $\Diff(X,\partial X)$ to refer to the group of diffeomorphisms that fix the boundary of the given manifold $X$ pointwise. If we have an embedding $f:\partial N\rightarrow \partial X$, then we will denote the space of neat embeddings that coincide with $f$ by $\Emb_f(N,X)$. For the space $S^1\times B^3$, we fix an embedding $i:S^2\rightarrow \partial (S^1\times B^3)$ given by $i(s) = (z_0,s)$, where $z_0$ is a fixed point. This map is simply the restriction of the inclusion map $i:{z_0}\times B^3\rightarrow S^1\times B^3$. We will abuse notation by using the same symbol $i$ to denote various maps in different places throughout the paper. The main argument for unknotting any $B^3$ is derived from the fact that any nonseparating $S^3$ in $S^1\times S^3$ extends to a smoothly embedded $B^4$ in $S^1\times B^4$. This is a consequence of the classification of pseudoisotopies of $S^1\times S^3$ (\cite{lashof1969classification}). 

\begin{lemma}[Budney-Gabai \cite{BG-3ball}] The group $\Diff(S^1\times B^3, \partial)$ acts transitively on $\Emb_{i}(B^3, S^1\times B^3)$.
\begin{proof}
Here we give an independent proof which differs from the one given in \cite{BG-3ball}. Let $f\in \Emb_{i}(B^3, S^1\times B^3)$ and let $i:B^3\rightarrow S^1\times B^3$ be the inclusion of $\{z_0\}\times B^3$. Since $f$ is a neat embedding, we have that $f = i$ on some collar neighborhood, $C$, of the boundary. Now, using the triviality of the normal bundle of $f(B^3)$, we extend our embedding to a smooth embedding $F:[z_0 - \epsilon, z_0+\epsilon]\times B^3\rightarrow S^1\times B^3$ such that $F$ restricts to $f$ on $\{0\}\times B^3$ and $F$ agrees with the inclusion of $[z_0 - \epsilon, z_0+\epsilon]\times B^3$, in $C$. 

Now, we define an embedding, $\Phi: C\cup (z_0 - \epsilon, z_0+\epsilon)\times B^3 \rightarrow S^1\times B^3$ by 
\[
\Phi(x) = 
\begin{cases}
\operatorname{id}(x) & x\in C\\
F(x) & x \in (z_0 - \epsilon, z_0+\epsilon)\times B^3
\end{cases}
\]
In the open subset $C\cup (z_0 - \epsilon, z_0+\epsilon)\times B^3\subset S^1\times B^3$, there is a smoothly embedded $S^3$, given by smoothly rounding the corners of the piecewise smooth set, 
\[
\big\{z_0-\frac{\epsilon}{2}\big\}\times B^3 \cup \{z_0+\frac{\epsilon}{2}\}\times B^3 \cup \partial(S^1\times B^3)\setminus \left(z_0-\frac{\epsilon}{2},z_0+\frac{\epsilon}{2}\right)\times S^2,
\]
and pushing the resulting sphere into the interior (see Figure \ref{F: Smoothing}). Let $S$ denote this embedded $3$--sphere. Note that $S$ bounds a smooth $B^4$ in $S^1\times B^3$. We now restrict $\Phi$ to the complement of the interior of the $4$--ball bounded by $S$.
\begin{figure}[ht]
    \centering
    \includegraphics[width = .5\linewidth]{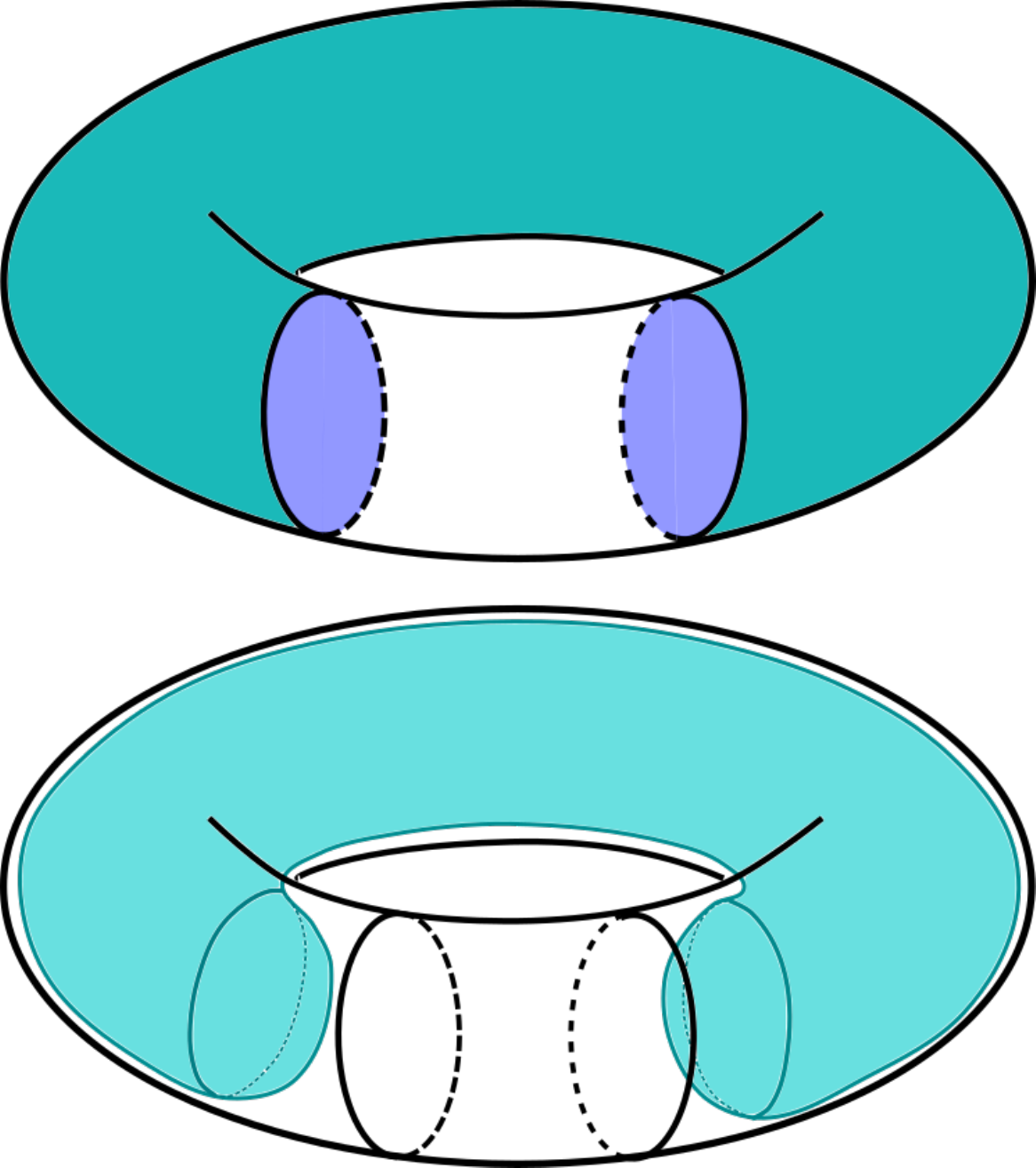}
    \caption{Smoothing of the $3$--sphere $S$.}
    \label{F: Smoothing}
\end{figure}

\textbf{Claim} If $\Phi(S)$ also bounds a smooth $4$--ball, then $\Phi$ extends to a diffeomorphism of all of $S^1\times B^3$ to itself. The claim follows from the fact that every diffeomorphism of $S^3$, extends to a diffeomorphism of $B^4$ \cite{cerf}. So, it only remains to show that $\Phi(S)$ bounds a smooth $4$--ball.

To show that $\Phi(S)$ bounds a smooth $4$--ball, we will show that, when considered as a subset of $S^4$, it bounds a smooth $4$--ball on one side, and hence on both sides. To do this, we attach to $S^1\times B^3$, a $4$--dimensional $2$--handle along the curve $S^1\times \{x_0\}$ with $x_0\in \partial B^3$, followed by a $4$--dimensional $4$--handle. Observe that, by definition, $\Phi = \operatorname{id}$ on $\partial (S^1\times B^3)$. Hence, we may extend $\Phi$ by the identity over the $2$-- and $4$--handles. Now since $S$ bounds a smooth $4$-ball, its complement in $S^4$ bounds a smooth $4$--ball. This is precisely the domain of $\Phi$ after we extend it over the $2$-- and $4$--handles. Thus, the extended $\Phi$ embeds $B^4$ into $S^4$. Since this is a smooth embedding, the complement has to be a smooth $4$-ball as well. This completes the proof.
\end{proof}
\end{lemma}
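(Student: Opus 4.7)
Given $f\in\Emb_i(B^3,S^1\times B^3)$, the goal is to produce a diffeomorphism $\Phi\in\Diff(S^1\times B^3,\partial)$ carrying the standard slice $i$ to $f$. My plan is to define $\Phi$ first on a tubular neighborhood of $f(B^3)$ together with a collar of the ambient boundary, then to reduce the remaining extension problem to filling a smooth $4$--ball across a single $3$--sphere, and finally to invoke Cerf's extension theorem to do that filling.

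For the first step, I would exploit that the normal bundle of $f(B^3)$ is trivial (by contractibility of $B^3$) to extend $f$ to a neat tubular-neighborhood embedding $F\colon(-\epsilon,\epsilon)\times B^3\to S^1\times B^3$ with $F(0,\cdot)=f$, arranged to agree with the product inclusion on a collar of $\partial B^3$. Combining $F$ on the slab $(-\epsilon,\epsilon)\times B^3$ with the identity on a collar $C$ of $\partial(S^1\times B^3)$ yields a partial smooth embedding $\Phi$ on $D=C\cup(-\epsilon,\epsilon)\times B^3$ that is already the identity on $\partial(S^1\times B^3)$.

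Next I would engineer a canonical smoothly embedded $3$--sphere $S$ lying inside $D$ by rounding the corners of the piecewise smooth union of $\{\pm\epsilon/2\}\times B^3$ with $\partial(S^1\times B^3)\setminus((-\epsilon/2,\epsilon/2)\times S^2)$ and pushing slightly into the interior. By construction $S$ bounds a smooth $4$--ball $B$ in $S^1\times B^3$ (the slab side containing $\{z_0\}\times B^3$). If I can show that $\Phi(S)$ also bounds a smooth $4$--ball, then Cerf's theorem that every self-diffeomorphism of $S^3$ extends across $B^4$ furnishes a filling of $\Phi$ across the two $B^4$'s, producing the desired ambient diffeomorphism.

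The main obstacle is thus the smooth bounding claim for $\Phi(S)$, and my plan is to deduce it by temporarily closing $S^1\times B^3$ up to $S^4$ via a canceling $2$--handle attached along $S^1\times\{x_0\}\subset\partial(S^1\times B^3)$ followed by a $4$--handle. Since $\Phi$ is the identity on $\partial(S^1\times B^3)$, it extends by the identity over both handles, and the extended domain becomes $S^4\setminus\interior(B)$, which by construction is itself a smooth $4$--ball. The image of this ball under the smooth embedding $\Phi$ is therefore a smooth $4$--ball bounded by $\Phi(S)$, as required. The subtlety here is that one cannot shortcut this step by general position or Schoenflies: smooth Schoenflies in dimension $4$ is open, so the fact that both sides of $S$ in $S^4$ are smooth $4$--balls has to be extracted from the explicit combinatorics of the construction of $S$ rather than from abstract principles.
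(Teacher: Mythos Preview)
Your outline follows the paper's argument essentially step for step: thicken $f$ to a slab embedding, patch with the identity on a boundary collar to get a partial $\Phi$, carve out a standard $S^3$ called $S$, reduce to showing $\Phi(S)$ bounds a smooth $B^4$, and close $S^1\times B^3$ up to $S^4$ by a $2$--handle and a $4$--handle so as to transport the problem there. That is exactly the paper's strategy.

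There is, however, a genuine gap at the end, and a related mislabeling earlier. First the mislabeling: the smooth $4$--ball that $S$ bounds in $S^1\times B^3$ is the \emph{long} (non--slab) side, essentially $[z_0+\epsilon/2,\,z_0-\epsilon/2+2\pi]\times B^3$ pushed slightly in; the slab side containing $\{z_0\}\times B^3$ also contains the entire boundary $S^1\times S^2$ and is not a $4$--ball. Your later sentence ``the extended domain becomes $S^4\setminus\interior(B)$'' is only consistent with $B$ being the non--slab side, so the parenthetical should be corrected.

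More seriously, the $4$--ball you produce bounded by $\Phi(S)$ is the \emph{wrong} one. The image of the extended domain under $\Phi$ is $\Phi(\text{slab}+\text{collar})\cup(\text{handles})$; this is the side of $\Phi(S)$ on which $\Phi$ is \emph{already} defined. To extend $\Phi$ to a diffeomorphism of $S^1\times B^3$ via Cerf you need the \emph{complementary} region---the closure of $(S^1\times B^3)\setminus\Phi(\text{slab}+\text{collar})$---to be a smooth $B^4$, and your argument does not address that. The paper supplies this missing step: once the extended $\Phi$ smoothly embeds a $B^4$ into $S^4$, the Palais--Cerf uniqueness of codimension--$0$ disk embeddings (any smooth orientation--preserving embedding of $B^4$ into $S^4$ is ambient isotopic to the standard one) forces the complement to be a smooth $B^4$ as well. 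This is not the open smooth Schoenflies statement---it only concerns $3$--spheres already known to bound a ball on one side---but it is a nontrivial ingredient that your write--up omits, and ironically it is precisely the kind of ``both sides'' conclusion your closing caveat warns cannot be taken for granted.
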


%We can generalize the proof above as follows. Let $\Emb_{i}(\sqcup_k B^3, \#^k S^1\times B^3)$ where $i:\sqcup_k S^2\rightarrow \partial(\#^k S^1\times B^3)$ is the inclusion of a standard $\{z_i\}\times S^2$ into a unique factor of the connected sum (Figure \ref{F: Standard-2spheres}). Then the same reasoning shows that if we cut out a thickening of any embedding $f\in \Emb_{i}(\sqcup_k B^3, \#^k S^1\times B^3)$, then after smoothing the corners, what is left is a smooth $4$-ball. We can then perform the same construction to get a diffeomorphism $\Phi\in \Diff(\#^k S^1\times B^3,\partial)$ such that $\Phi|_{\{z_i\}\times B^3} = f$ for each $i$. So, we proved the following.

%\begin{figure}[ht]
    %\centering
    %\includegraphics[width = .6\linewidth]{}
    %\caption{A cartoon of $\#^k S^1\times S^2$ with $k = 3$. The dark blue circles represent the "standard" $2$--spheres, $\{z_i\}\times S^2$.}
    %\label{F: Standard-2spheres}
%\end{figure}
%\begin{lemma}
%$\Diff(\#^k S^1\times B^3,\partial)$ acts transitively on $\Emb_{i}(\sqcup_k B^3, \#^k S^1\times B^3)$
%\end{lemma}
%For the next corollary, we will identify $\#_{k}S^1\times S^3$ with $\#_{k}S^1\times B^3\cup_{id} \#_{k}S^1\times B^3$ and denote by $\Emb_{std}(\sqcup_k S^3, \#^k S^1\times S^3)$ the set of embeddings obtained by gluing together two elements of $\Emb_{i}(\sqcup_k B^3, \#^k S^1\times B^3)$. The following corollary is immediate.

\begin{corollary}[Budney-Gabai \cite{BG-3ball}]\label{C: Trans-Prop}
The group $\Diff(S^1\times S^3)$ acts transitively on the nonseparating $S^3$'s.
\end{corollary}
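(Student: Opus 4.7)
The plan is to reduce the Corollary to the Lemma via a cut-and-glue argument. Fix a splitting $S^3=B^3_+\cup_{S^2}B^3_-$, which induces a decomposition
\[
S^1\times S^3 \;=\; Y_+\cup_{S^1\times S^2}Y_-,
\]
where each $Y_\pm=S^1\times B^3_\pm\cong S^1\times B^3$. The standard fiber $\Sigma_0=\{z_0\}\times S^3$ meets $S^1\times S^2$ exactly in $\{z_0\}\times S^2$ and splits as the two standard inclusions $\{z_0\}\times B^3_\pm\subset Y_\pm$ from the Lemma; the strategy is to isotope a given nonseparating $\Sigma$ into the same configuration and then invoke the Lemma separately on each half.

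So let $\Sigma\subset S^1\times S^3$ be a nonseparating $S^3$. The key geometric step is to isotope $\Sigma$ ambiently so that $\Sigma\cap(S^1\times S^2)=\{z_0\}\times S^2$. After a transverse perturbation, $F:=\Sigma\cap(S^1\times S^2)$ is a closed surface in $\Sigma\cong S^3$, and it is nonempty: if $\Sigma$ were contained in $\operatorname{int}(Y_\pm)$, then since $H_3(S^1\times B^3)=0$ the class $[\Sigma]$ would vanish and $\Sigma$ would separate $Y_\pm$, hence separate $S^1\times S^3$, contradicting the hypothesis. Because $\Sigma$ is simply connected, every circle of $F$ bounds a disk in $\Sigma$, and (using that $\pi_1(S^1\times S^2)\to\pi_1(S^1\times S^3)$ is an isomorphism so these circles also bound in $S^1\times S^2$) standard innermost-disk compressions reduce $F$ to a disjoint union of $2$-spheres. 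Next, using the classification of embedded $S^2$'s in $S^1\times S^2$ (each either bounds a ball or is isotopic to a fiber $\{pt\}\times S^2$) together with the fact that the algebraic intersection $\Sigma\cdot(S^1\times\{pt\})=\pm 1$ is forced by nonseparability, pair off and cancel all but one sphere by ambient isotopies supported in a collar of $S^1\times S^2$. A final isotopy puts the surviving sphere onto $\{z_0\}\times S^2$.

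Once $\Sigma\cap(S^1\times S^2)=\{z_0\}\times S^2$, $\Sigma$ splits as $\Sigma_+\cup\Sigma_-$ with each $\Sigma_\pm\subset Y_\pm$ a neatly embedded $B^3$ whose boundary matches the standard inclusion $i$. Applying the Lemma in each $Y_\pm$ yields $\phi_\pm\in\Diff(Y_\pm,\partial Y_\pm)$ sending $\Sigma_\pm$ to $\{z_0\}\times B^3_\pm$, and since each is the identity on $S^1\times S^2$ they glue to a diffeomorphism $\Phi\in\Diff(S^1\times S^3)$ with $\Phi(\Sigma)=\Sigma_0$. The main obstacle I anticipate is the intersection cleanup in the middle step: transversality and the two applications of the Lemma are routine, but reducing $F$ to a single, standardly placed 2-sphere requires careful innermost-disk and cancellation arguments that exploit both the nonseparability of $\Sigma$ and the simple-connectivity of $\Sigma$.
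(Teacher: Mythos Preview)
Your overall strategy---split $S^1\times S^3$ along $M=S^1\times S^2$, normalize $F=\Sigma\cap M$, then invoke the Lemma on each half---is natural, but the normalization step has a real gap. ``Innermost-disk compressions'' imports a $3$--dimensional reflex into a $4$--dimensional setting: compressing $F$ along a disk $D\subset\Sigma$ is a \emph{surgery} on $F$, not an isotopy of either $\Sigma$ or $M$. To realize it by an ambient isotopy you would need the $2$--sphere $D\cup D'$ (with $D'\subset M$ the disk supplied by the Loop Theorem) to bound an embedded $3$--ball in $S^1\times S^3$, and that is a $2$--knot question with no reason to have an affirmative answer---knotted $2$--spheres already abound in $S^4$, hence in $S^1\times S^3$. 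The same obstruction recurs in your sphere-cancellation: to remove an inessential component of $F$ you would need the $3$--sphere formed by a $3$--ball in $\Sigma$ and a $3$--ball in $M$ to bound a smooth $B^4$, which is precisely a Schoenflies-type statement you have no independent access to. So the cleanup step, which you rightly flag as the main obstacle, does not go through as sketched.

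The paper leaves the deduction of the Corollary from the Lemma implicit, but the clean route cuts along a dual \emph{circle} rather than the hypersurface $M$. Since $\Sigma$ is nonseparating, one immediately produces an embedded circle $C$ meeting $\Sigma$ transversally in a single point (a short arc across $\Sigma$ closed up in the connected complement); such a $C$ generates $H_1$ and is isotopic to $S^1\times\{pt\}$ (project to the $S^1$ factor and straighten). After ambiently isotoping $C$ to $S^1\times\{pt\}$, the complement $S^1\times S^3\setminus\nu(C)$ is exactly $S^1\times B^3$, and $\Sigma\setminus\nu(C)$ sits inside it as a neat $B^3$ with the standard boundary sphere. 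A single application of the Lemma gives $\phi\in\Diff(S^1\times B^3,\partial)$ carrying this ball to $\{z_0\}\times B^3$, and extending $\phi$ by the identity over $\nu(C)$ produces the desired element of $\Diff(S^1\times S^3)$. Reducing $\Sigma\cap C$ to one point is elementary; reducing $\Sigma\cap M$ to one $2$--sphere is not.
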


What the corollary tells us is that, given any embedding of a nonseparating $S^3$ in $S^1
\times S^3$, we can realize this embedding by restricting a diffeomorphism of $S^1\times S^3$ to some $\{z_0\}\times S^3$. 
%Note that every nonseparating $S^3\rightarrow S^1\times S^3$ is isotopic to an embedding in $\Emb_{std}(S^3, S^1\times S^3)$. To see this, note that since the $3$--sphere is nonseparating, there is a smoothly embedded $S^1$ in $S^1\times S^3$ which intersect the $3$--sphere transversely in a single point $p$. This $S^1$ is homotopic (and hence isotopic) to $S^1\times \{p\}$ fixing the point p. We can then extend this isotopy to an ambient isotopy. Next, take a small tubular neighborhood of $S^1\times \{p\}$. This tubular neighborhood then decomposes the embedded $S^3$ into two $B^3$'s, one in the tubular neighborhood, the other in the complement. Finally, we observe that the complement is also an $S^1\times B^3$.  
\begin{lemma}\label{L: Extension property}
Every nonseparating $S^3$ in $S^1\times S^3$ extends to a proper embedding of $B^4$ in $S^1\times B^4$.
\begin{proof}
Let $f:S^3 \rightarrow S^1\times S^3$ be an embedding of a nonseparating $S^3$ . By Corollary \ref{C: Trans-Prop}, there is a diffeomorphism, $F$, of $S^1\times S^3$ such that $F$, when restricted to $\{z_0\}\times S^3$, is the embedding $f$. Finally, by \cite{lashof1969classification}, $F$ extends to a diffeomorphism of $S^1
\times B^4$. Since $\{z_0\}\times S^3$ bounds the smooth 4-ball $\{z_0\}\times B^4$, $F\big|_{\{z_0\}\times B^4}$ is then the extension.

%In \cite{HighDim-LP}, the authors showed that every PL-homeomorphism of $\#^k S^1\times S^3$ extends to a PL--homeomorphism of $\#^k S^1\times B^4$. Further, every smooth structure induces a PL--structure, unique up to PL-homeomorphism \cite{hirsch1964smoothing}. It follows that the diffeomorphism $F$ induces a PL-homeomorphism of $\#^k S^1\times S^3$, which extends to a PL-homeomorphism of $\#^k S^1\times B^4$. If we restrict the homeomorphism to the compressing $B^4$'s, we get locally flat PL-embeddings. Since  $\#^k S^1\times B^4$ admits a smoothing in which the boundary of each $PL$--$B^4$ is a smooth submanifold (they are exactly the image of $f$), and the $4$-balls are codimension--1 submanifolds, Theorem 7.4 of \cite{hirsch1964smoothing} says we can slightly perturb each $4$--Ball, rel $\partial$, to be smooth submanifold. Hence, the image of each $S^3$ bounds a smooth $B^4$. Again By Cerf \cite{cerf}, the embeddings then smoothly extend over each $B^4$. 
\end{proof}
\end{lemma}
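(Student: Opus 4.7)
The plan is to combine the transitivity statement from Corollary \ref{C: Trans-Prop} with an extension result for diffeomorphisms of $S^1 \times S^3$ to $S^1 \times B^4$. Given a nonseparating embedding $f : S^3 \hookrightarrow S^1 \times S^3$, I would first invoke Corollary \ref{C: Trans-Prop} to find a diffeomorphism $F$ of $S^1 \times S^3$ whose restriction to the standard slice $\{z_0\} \times S^3$ is $f$. In other words, the problem of extending a wild-looking $S^3$ is reduced to the problem of extending the model slice $\{z_0\} \times S^3$, which already bounds the obvious disk $\{z_0\} \times B^4$ inside $S^1 \times B^4$.

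The second step is to extend $F$ itself across $S^1 \times B^4$. Once such an extension $\widetilde{F} : S^1 \times B^4 \to S^1 \times B^4$ exists, the restriction $\widetilde{F}\big|_{\{z_0\} \times B^4}$ is a proper smooth embedding of $B^4$ into $S^1 \times B^4$, and its boundary is precisely $F(\{z_0\} \times S^3) = f(S^3)$, which is exactly the extension we want.

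The nontrivial ingredient, and the part I expect to be the main obstacle, is the extension in the second step: a self-diffeomorphism of $\partial(S^1 \times B^4) = S^1 \times S^3$ need not in general extend over the filling $5$--manifold, and for an arbitrary boundary it typically does not. For this particular manifold, however, the required extension follows from Lashof's classification of pseudoisotopies of $S^1 \times S^3$ in \cite{lashof1969classification}, which is the technical input I would quote as a black box rather than reprove. With that cited, the argument is just the two-step chain: realize $f$ as the restriction of a boundary diffeomorphism using the previous corollary, extend that boundary diffeomorphism over $S^1 \times B^4$ via Lashof, and read off the extension of $f$ as the image of the standard disk slice.
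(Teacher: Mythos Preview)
Your proposal is correct and matches the paper's proof essentially line for line: invoke Corollary~\ref{C: Trans-Prop} to realize $f$ as the restriction of a diffeomorphism $F$ of $S^1\times S^3$ to the standard slice, extend $F$ over $S^1\times B^4$ via \cite{lashof1969classification}, and take the image of $\{z_0\}\times B^4$ as the desired $4$--ball.
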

We are now ready to prove the main theorem.
\begin{proof}[Proof of Theorem \ref{T: Unknot B^3}]\label{P: Unknot B^3}
Let $Y = S^4\setminus \nu(\mathcal{U})$. As we are considering what happens in $B^5$, take a collar neighborhood $S^4\times I$ of the boundary and extend $Y$ to $Y\times I$. As $\mathcal{U}$ is the unknot, $Y$ is diffeomorphic to $S^1\times B^3$ and $Y\times I$ is then diffeomorphic to $S^1\times B^4$, which in our given product structure is just $(S^1\times B^3)\times I$. Now consider the 3--spheres, $B\times\{0\}\cup \mathcal{U}\times I \cup B'\times \{1\}$. By Lemma \ref{L: Extension property}, the 3--sphere bounds a smoothly embedded 4--ball. Hence, we can isotope, rel $\mathcal{U}$, $B$ to $B'\cup \mathcal{U}\times I$. Then, using the product structure, isotope $B'\cup \mathcal{U}\times I$ to $B'$ (See Figure \ref{F:3Ball Iso}). 
\begin{figure}
    \centering
    \includegraphics[width=.5\linewidth]{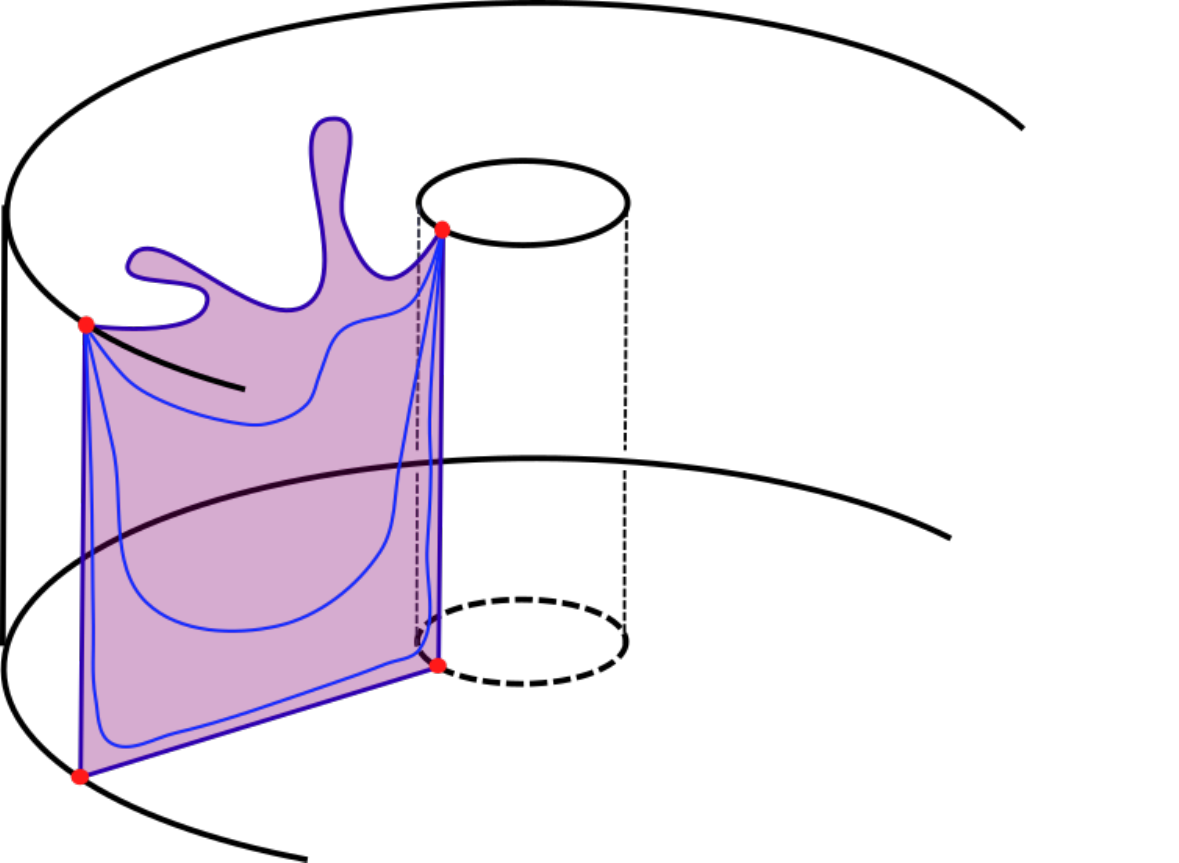}
    \caption{This figure is a cartoon depicting how the 4--ball bounding the $S^3$ inside of $S^1\times B^4$ is used to isotope between the two 3--balls.}
    \label{F:3Ball Iso}
\end{figure}
\end{proof}

%The proof above explicitly proves %the following:

%\begin{corollary}
%Let $\mathcal{U}_k$ be the $k$--component unlink of $2$--spheres in $S^4$. Let $B$ and $B'$ be two collections of spanning $3$-balls for $\mathcal{U}$. Then $B$ is concordant to $B'$ rel $\mathcal{U}$.
%\end{corollary}

%The last corollary is in some sense a slice Schoenflies result, and is apparently a classical result in the sense that experts in the field have known that it follows from other known results. However we were unable to locate any explicit reference and so we include it here to fill this gap in the literature.

The final corollary presented in this note can be viewed as a slice Schoenflies result. It is considered a classical result in the sense that experts in the field have known that it follows from other results. However, we were unable to find an explicit reference for this result in the literature, so we include it here to fill this gap.
 
\begin{corollary}\label{C: Slice_Schoenflies}
Every $S^3$ in $S^4$ bounds a smooth $B^4$ in $B^5$.

\begin{proof}
We begin by considering $S^4 = \partial B^5$. As every embedded $S^3$ separates $S^4$ into two connected pieces, we can assume that the $S^3$ separates the north and south poles of $S^4$. Attach an oriented 5--dimensional 1--handle to $B^5$ with the attaching sphere being the two poles and attaching regions disjoint from the $S^3$. The result is $S^1\times B^4$ and by Theorem \ref{L: Extension property}, the embedded $S^3$ bounds a smooth $B^4$. Denote this four ball by $\tilde{B}^4$. Even though $\tilde{B}^4$ is properly embedded, it might not be contained entirely in the original $B^5$ (which is given by compressing the two $B^4$'s used to attach the 1--handle). To fix this, we now consider the universal cover $\R\times B^4\rightarrow S^1\times B^4$, and the lift $\tilde{B}^4$ to the universal cover. As the embedded $S^3$ lies between two standard $S^3$'s, we can isotope $\tilde{B}^4$ to lie  between the two standard $B^4$'s. Projecting down gives a new $4$--ball which is contained in the original $B^5$.
\end{proof}
\end{corollary}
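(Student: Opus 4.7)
The plan is to reduce to Lemma \ref{L: Extension property} by enlarging $B^5$ through a 1-handle attachment that makes the given $S^3$ nonseparating in the new boundary. Since $S^3$ separates $S^4 = \partial B^5$ into two components, I would attach a 5-dimensional 1-handle to $B^5$ with its two attaching 4-balls placed one in each component of $S^4 \setminus S^3$. The enlarged 5-manifold is diffeomorphic to $S^1 \times B^4$ with boundary $S^1 \times S^3$, and the given $S^3$ is now nonseparating there, since a loop passing once through the new 1-handle intersects it transversely in a single point.

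Lemma \ref{L: Extension property} then produces a smooth, properly embedded $B^4 \subset S^1 \times B^4$ bounded by the given $S^3$. The main obstacle I anticipate is that this $B^4$ need not lie in the original $B^5$: it may traverse the added 1-handle and wind around the $S^1$ factor any number of times, so one cannot directly read off a 4-ball in $B^5$ from the conclusion of the lemma.

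To undo this winding, I would pass to the universal cover $\R \times B^4 \to S^1 \times B^4$. The standard fibers $\{n\} \times B^4$ for $n \in \Z$ are pairwise disjoint 4-balls whose restrictions to the boundary bracket the lifts of the given $S^3$ into a disjoint $\Z$-family, one sphere sitting between each consecutive pair $\{n\} \times S^3$, $\{n+1\} \times S^3$. I would then choose the lift $\tilde{B}^4$ of the 4-ball from Lemma \ref{L: Extension property} whose boundary sits between $\{0\} \times S^3$ and $\{1\} \times S^3$. Because $\tilde{B}^4$ is compact (its boundary is compact and it is proper in $\R \times B^4$) and its boundary is already confined to one fundamental slab of the boundary, $\tilde{B}^4$ can be isotoped rel boundary into the region bounded by $\{0\} \times B^4$ and $\{1\} \times B^4$. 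This region is precisely a copy of the original $B^5$, so projecting back to $S^1 \times B^4$ delivers a smooth $B^4$ contained in $B^5$ and bounded by the given $S^3$, completing the proof.
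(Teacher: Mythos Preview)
Your proposal is correct and follows essentially the same argument as the paper: attach a $5$--dimensional $1$--handle to make the $S^3$ nonseparating in $S^1\times S^3$, invoke Lemma~\ref{L: Extension property} to get a $B^4$, then pass to the universal cover $\R\times B^4$ and isotope the lifted $4$--ball into a single fundamental slab before projecting back. The only minor remark is that your parenthetical justification for compactness of the lift (``its boundary is compact and it is proper in $\R\times B^4$'') is not quite the right reason; the lift is compact simply because $B^4$ is simply connected, so each lift is a diffeomorphic copy of the original compact $4$--ball.
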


\bibliographystyle{plain}
\bibliography{3Ballin5Ball}
\end{document}